\theoremstyle{plain}
\newtheorem{mainthm}{Theorem}
\newtheorem*{conj*}{Conjecture}
\newtheorem*{cor*}{Corollary}
\newtheorem{theorem}{Theorem}[section]
\newtheorem{proposition}[theorem]{Proposition}
\newtheorem{corollary}[theorem]{Corollary}
\newtheorem{question}{Question}
\newtheorem*{def*}{Definition}
\newtheorem{example}[theorem]{Example}
\newtheorem{definition}[theorem]{Definition}
\renewcommand{\epsilon}{\varepsilon}
\newcommand{\Z}{\mathbb{Z}}
\newcommand{\N}{\mathbb{N}}
\newcommand{\eps}{\varepsilon}
\newcommand{\diam}{\operatorname{diam}}
\title[Sensitivity, local stable/unstable sets and shadowing]{Sensitivity, local stable/unstable sets and shadowing}
\author[Mayara Antunes, Bernardo Carvalho and Margoth Tacuri]{Mayara Antunes, Bernardo Carvalho and Margoth Tacuri}
\date{\today}
\thanks{2010 \emph{Mathematics Subject Classification}: Primary 37D10; Secondary 37B99.}
\keywords{Sensitivity, stable set, shadowing.}
\begin{document}
\begin{abstract}
In this paper we study local stable/unstable sets of sensitive homeomorphisms with the shadowing property defined on compact metric spaces. We prove that local stable/unstable sets always contain a compact and perfect subset of the space. As a corollary we generalize results in \cite{ACCV} and \cite{CC2} proving that positively countably expansive homeomorphisms defined on compact metric spaces satisfying either transitivity and the shadowing property, or the L-shadowing property, can only be defined in countably spaces.
\end{abstract}

\maketitle

\section{Introduction and the main result}

The concept of chaos is in the core of the dynamical systems theory. The understanding that random behavior can occur from the evolution of deterministic systems, such as in discrete dynamical systems on compact metric spaces, lead many mathematicians to try to formalize the notion of chaos. The first to do that, to our best knowledge, was Guckenheimer \cite{G} in the setting of one dimensional maps. R. Devaney, in his book \cite{D}, gathered some of these attempts in a definition that is now known as Devaney chaotic systems. The dynamical property that captures the central idea of chaos is the \emph{sensitivity to initial conditions}. This is best illustrated by Edward Lorenz and his ideas of the instability of the atmosphere and the butterfly effect \cite{L}. We now define it precisely.

\begin{definition}
A map $f:X\rightarrow X$ defined in a compact metric space $(X,d)$ is \emph{sensitive} if there is $\varepsilon>0$ such that for every $x\in X$ and every $\delta>0$ there exist $y\in X$ with $d(x,y)<\delta$ and $n\in\mathbb{N}$ satisfying $$d(f^{n}(x),f^{n}(y))>\varepsilon.$$ The number $\eps$ is called the \emph{sensitivity constant} of $f$. 
\end{definition}

Sensitivity means that for each initial condition there are arbitrarily close distinct initial conditions having completely different futures. We can explain sensitivity in a few distinct ways. Denoting by $$B(x,\delta)=\{y\in X; \,\,\, d(y,x)<\delta\}$$ the ball centered at $x$ and radius $\delta$, sensitivity implies the existence of $\eps>0$ such that for every ball $B(x,\delta)$ with $x\in X$ and $\delta>0$, there exists $n\in\N$ such that 
$$\diam(f^n(B(x,\delta)))>\eps$$ 
where $\diam(A)=\sup\{d(a,b); a,b\in A\}$ denotes the diameter of $A$. Thus, sensitivity increases the diameter of balls with positive radius. We can explain sensitivity using local stable sets as follows. We define the $\varepsilon$\emph{-stable set} of $x$ by 
$$W^s_\varepsilon(x):=\{y\in X\;;\;d(f^n(y), f^n(x))\leq\varepsilon \,\,\,\,\,\, \text{for every} \,\,\,\,\,\, n\in\mathbb{N}\}.$$ Roughly speaking, this is the set of initial conditions whose futures are similar to the future of $x$. It follows that $f$ is sensitive if, and only if, there exists $\eps>0$ such that for any $x\in X$, the $W^s_{\eps}(x)$ does not contain any neighborhood of $x$. Thus, sensitivity can be seen as a condition on all local stable sets of the space. 
In this paper we study sensitivity for homeomorphisms and how it implies the existence of several initial conditions with similar pasts. The idea is to understand whether sensitivity can also be seen as a condition on all local unstable sets, ensuring they are non-trivial in distinct scenarios. Recall the definition of the $\varepsilon$\emph{-unstable set} of $x$: $$W^u_\varepsilon(x):=\{y\in X\;;\;d(f^{-n}(y), f^{-n}(x))\leq\varepsilon \,\,\,\,\,\, \text{for every} \,\,\,\,\,\, n\in\mathbb{N}\}.$$ Analogously, this is the set of initial conditions whose pasts are similar to the past of $x$. The main theorem of this paper proves the existence of a compact and perfect subset of any local unstable set for assuming sensitivity and the shadowing property. 

\begin{definition}
	We say that a homeomorphism $f:X\rightarrow X$ satisfies the \emph{shadowing property} if given $\varepsilon>0$ there is $\delta>0$ such that for each sequence $(x_n)_{n\in\mathbb{Z}}\subset X$ satisfying
	$$d(f(x_n),x_{n+1})<\delta \,\,\,\,\,\, \text{for every} \,\,\,\,\,\, n\in\mathbb{Z}$$ there is $y\in X$ such that
	$$d(f^n(y),x_n)<\varepsilon \,\,\,\,\,\, \text{for every} \,\,\,\,\,\, n\in\mathbb{Z}.$$
	In this case, we say that $(x_k)_{k\in\mathbb{Z}}$ is a $\delta-$pseudo orbit of $f$ and that $(x_n)_{n\in\mathbb{Z}}$ is
	$\varepsilon-$shadowed by $y$.
\end{definition}

The following is our main result.

\begin{mainthm}\label{B} Let $f\colon X\to X$ be a homeomorphism of a compact metric space $X$ satisfying the shadowing property. 
	\begin{enumerate}
		\item If $f$ is sensitive, with sensitivity constant $\eps>0$, then for each $x\in X$ there is a compact and perfect set $$C_x\subset W^u_\varepsilon(x).$$ 
		\item If $f^{-1}$ is sensitive, with sensitivity constant $\eps>0$, then for each $x\in X$ there is a compact and perfect set $$C_x\subset W^s_\varepsilon(x).$$
	\end{enumerate}
\end{mainthm}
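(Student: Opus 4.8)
The plan is to prove part (1) and then derive part (2) by applying (1) to $f^{-1}$, since the shadowing property is preserved under taking inverses and $W^u_\varepsilon(x)$ for $f^{-1}$ is exactly $W^s_\varepsilon(x)$ for $f$. So the whole work is in producing, for a sensitive $f$ with shadowing, a compact perfect set inside each $W^u_\varepsilon(x)$. The natural strategy is to build a Cantor set by a standard branching construction: I will fix $x$, and inductively construct a tree of points $\{y_s : s \in \{0,1\}^{<\omega}\}$ together with a decreasing sequence of scales so that (a) every branch converges to a point of $X$, (b) all limit points lie in $W^u_\varepsilon(x)$, and (c) distinct branches give distinct limit points, yielding a continuous injection of $\{0,1\}^{\mathbb N}$ into $W^u_\varepsilon(x)$; its image $C_x$ is then compact and perfect. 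The perfectness and nonemptiness come for free from the Cantor-set structure, so the content is entirely in engineering the branching while staying inside the $\varepsilon$-unstable set.

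The key idea for staying inside $W^u_\varepsilon(x)$ is to use pseudo-orbits that agree with the backward orbit of $x$ far in the past and only ``branch'' after some large negative time, then apply shadowing. More precisely, given $\eps$, let $\delta=\delta(\eps)$ be the shadowing constant (I may want $\eps/2$ or similar to get strict inequalities and room to iterate). Sensitivity, applied at the point $f^{-N}(x)$ with a small radius, produces a point $z$ near $f^{-N}(x)$ whose forward orbit separates from that of $f^{-N}(x)$ by more than the sensitivity constant at some time; splicing the finite segment of the orbit of $z$ onto the backward orbit of $x$ (i.e. using the pseudo-orbit that equals $f^{n}(x)$ for $n \le -N$, then follows $f^{n+N}(z)$ for a while, then is completed arbitrarily as a genuine orbit) produces, via shadowing, a point whose backward orbit $\eps$-shadows that of $x$ — hence lands in $W^u_\varepsilon(x)$ — but which is forced to be a definite distance from the ``other branch'' because the two spliced pseudo-orbits are far apart at the separation time. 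Iterating this with $N \to \infty$ and halving the allowed shadowing error at each level of the tree makes the branches Cauchy and keeps all limits in $W^u_\varepsilon(x)$ (using that a uniform limit of points each $\eps_k$-shadowing the backward orbit of $x$ with $\eps_k \to 0$, or more simply each staying within $\eps$, still stays within $\eps$).

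In more detail, at level $k$ of the construction I will have chosen a time $N_k$ and, for each $s$ of length $k$, a pseudo-orbit $\xi^s$ that coincides with the backward orbit of $x$ for indices $\le -N_k$ and is a genuine orbit segment determined by earlier choices for indices in a middle range; shadowing gives $y_s$ with $d(f^n(y_s), \xi^s_n) < \eps$ for all $n \le 0$, in particular $d(f^{-n}(y_s), f^{-n}(x)) < \eps$ for $n \ge N_k$, while for $0 \le n < N_k$ I control $d(f^{-n}(y_s), f^{-n}(x))$ by having built the middle of $\xi^s$ to stay within $\eps$ of the orbit of $x$ except at the carefully chosen branching times, where I only need the two children of a node to be pulled apart. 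To make the two children $y_{s0}$ and $y_{s1}$ genuinely distinct and keep them distinct in the limit, I use sensitivity to force their pseudo-orbits to differ by more than the sensitivity constant $\eps$ at some common time $m_s$, and I arrange the scales at all deeper levels to be smaller than a fixed fraction of $\eps$ so that $d(f^{m_s}(\text{limit along } s0), f^{m_s}(\text{limit along } s1))$ stays bounded below; this gives injectivity of the branch map. The map from $\{0,1\}^{\mathbb N}$ to $X$ is continuous because consecutive levels differ by at most the $k$-th scale (summable to something finite), so its image $C_x$ is compact; it is perfect because the domain is perfect and the map is a continuous injection on a compact space, hence a homeomorphism onto its image.

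The main obstacle I anticipate is bookkeeping the interaction between the \emph{forward} separation coming from sensitivity (sensitivity is a statement about $f^n$, $n \in \mathbb N$) and the \emph{backward} shadowing we want (membership in $W^u_\varepsilon$), together with keeping the middle portions of all $2^k$ pseudo-orbits simultaneously within $\eps$ of the orbit of $x$ while still inserting the required separations; the branching times $m_s$ must be chosen before the later levels, and one must ensure that modifying a pseudo-orbit at a later (more negative) splicing time $-N_{k+1}$ does not destroy the separation already achieved at time $m_s$ for $|s| \le k$. The clean way to handle this is to make $N_{k+1}$ large enough that the entire ``already constructed'' finite orbit pattern for level $k$ sits in the index window $(-N_{k+1}, 0]$ and is literally reused verbatim as the tail-end (near index $0$) of the level-$(k+1)$ pseudo-orbits, so the separations are inherited exactly; the only genuinely new verification at each step is that shadowing with the smaller error $\eps_{k+1}$ produces points whose orbits track these fixed finite patterns closely enough. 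Once that inductive invariant is set up correctly, the limit argument and the verification $C_x \subset W^u_\varepsilon(x)$ are routine.
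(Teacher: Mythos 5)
There is a genuine gap, and it lies exactly where you anticipated trouble: the placement of the separation in time. Your plan branches the pseudo-orbit at a large negative time $-N_k$ and uses sensitivity at $f^{-N_k}(x)$ to pull the two children apart at some time $m$ lying in the window $(-N_k,0]$. But membership in $W^u_\varepsilon(x)$ constrains precisely the non-positive iterates, and sensitivity gives you no control over \emph{when} the separation occurs within that window nor over \emph{how large} it is (only a lower bound $>\varepsilon$; the actual distance at the first separation time is bounded only by the modulus of continuity of $f$ applied to $\varepsilon$, which can far exceed $2\varepsilon$). So at the branching time at least one of the two spliced pseudo-orbits sits at distance $>\varepsilon$ from $f^{m-N_k}(x)$ with no upper bound, and the shadowed point inherits this: it need not lie in $W^u_\varepsilon(x)$ at all. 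Your stated invariant --- keep the middle of every $\xi^s$ within $\varepsilon$ of the orbit of $x$ ``except at the carefully chosen branching times'' --- is therefore incompatible with the conclusion you want at exactly those times. A secondary problem: even if the separation at a common time $m_s$ were achieved, it does not persist to descendants. If $m_s>0$, deeper levels are built by splicing in the forward orbit of a \emph{new} nearby point (that is the whole point of invoking sensitivity again), so the forward iterates at time $m_s$ are no longer controlled; if $m_s\le 0$, you are back to the first problem. So the injectivity of your branch map is not established.

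The repair is to put the branching at time $0$ and let all the separation happen in \emph{forward} time, where it costs nothing for $W^u_\varepsilon(x)$; this is what the paper does. Choose $\delta$ so that $\delta$-pseudo-orbits are $\varepsilon/2$-shadowed, use sensitivity to find $x_1$ with $d(x,x_1)<\delta$ and $x_1\notin W^s_\varepsilon(x)$, and shadow the pseudo-orbit equal to $f^k(x)$ for $k<0$ and $f^k(x_1)$ for $k\ge 0$. The shadowing point lies in $W^u_{\varepsilon/2}(x)\cap W^s_{\varepsilon/2}(x_1)$ and differs from $x$ because its forward orbit tracks that of $x_1$, which escapes from that of $x$. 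Iterating this at each already-constructed point with shadowing errors $\varepsilon_k<\min\{\varepsilon/2^{k+1},\ \tfrac12\min\text{-gap of }C_k\}$ keeps everything inside $W^u_\varepsilon(x)$ (the errors telescope along the $W^u$ inclusions and sum to less than $\varepsilon/2$) and makes each new point distinct from all old ones by a purely metric argument: it is within $\varepsilon_k$ of its parent while all other points are farther than $\varepsilon_k$ from the parent. One then takes $C_x$ to be the closure of the increasing union of these finite sets and checks perfectness directly; no injective embedding of $\{0,1\}^{\mathbb N}$ is needed, and the Cauchy control you were seeking via $N_k\to\infty$ comes for free from the $n=0$ term of $W^u_{\varepsilon_k}(y)$, i.e.\ $d(c_{k+1}(y),y)\le\varepsilon_k$. (Your reduction of item (2) to item (1) via $f^{-1}$ is fine.)
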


\begin{proof}
	This proof is inspired by the proof of Proposition 2.2.2 in \cite{ArtigueDend}. Assume that $f$ is a sensitive homeomorphism with sensitivity constant $\eps>0$. The shadowing property assures the existence of $\delta\in(0,\eps)$ such that every $\delta$-pseudo orbit of $f$ is $\varepsilon/2$-shadowed. Given $x\in X$, we can use the sensitivity of $f$ to obtain $x_1\in X$ such that $d(x, x_1)<\delta$ and 
	%$$d(f^{n}(x),f^{n}(x_1))>\varepsilon \,\,\,\,\,\, \text{for some} \,\,\,\,\,\, n\in\mathbb{N},$$ 
	$x_1\notin W^s_{\varepsilon}(x)$. Consider the sequence $(x^k)_{k\in\mathbb{Z}}$ defined as follows
	$$x^k=\left\{\begin{array}{ll}
	f^k(x), & k< 0\\
	f^k(x_1), & k\geq 0.\\
	\end{array}\right.$$ 
	The sequence $(x^k)_{k\in\mathbb{Z}}$ is a $\delta$-pseudo orbit of $f$ and then by the shadowing property there is 
	$$c_1(x)\in W^u_{\varepsilon/2}(x)\cap W^s_{\varepsilon/2}(x_1).$$
	Note that $c_1(x)\neq x$ since $c_1(x)\in W^s_{\varepsilon/2}(x_1)$ and $x_1\notin W^s_{\varepsilon}(x)$, and consider the set $$C_1=\{x, c_1(x)\}.$$
	Let $\varepsilon_1>0$ be such that $$\eps_1<\min\{\eps/4,d(x,c_1(x))/2\}$$
	and choose $\delta_1\in(0,\eps_1)$, given by the shadowing property, such that every $\delta_1$-pseudo orbit of $f$ is $\eps_1$-shadowed. We can use the sensitivity of $f$ for each $y\in C_1$ to obtain $y_1=y_1(y)$ such that $$d(y,y_1)<\delta_1 \,\,\,\,\,\, \text{and} \,\,\,\,\,\, y_1\notin W^s_\varepsilon(y).$$ The sequence $(y^k)_{k\in\mathbb{Z}}$ given by $$y^k=\left\{\begin{array}{ll}
	f^k(y), & k<0\\
	f^k(y_1), & k\geq0\\
	\end{array}\right.$$ 
	is a $\delta_1$-pseudo orbit of $f$, so the shadowing property assures the existence of 
	$$c_2(y)\in W^u_{\varepsilon_1}(y)\cap W^s_{\varepsilon_1}(y_1)$$ 
	Note that $c_2(y)\in W^u_\varepsilon(x)$ for every $y\in C_1$ since 
	$$y\in W^u_{\varepsilon/2}(x), \,\,\,\,\,\, c_2(y)\in W^u_{\varepsilon_1}(y) \,\,\,\,\,\, \text{and} \,\,\,\,\,\, \eps/2+\eps_1<\eps.$$ 
	Also, $c_2(y)\neq y$ since $c_2(y)\in W^s_{\varepsilon_1}(y_1)$ and $y_1\notin W^s_{\varepsilon}(y)$. Moreover, 
	$$c_2(y)\neq z \,\,\,\,\,\, \text{for each} \,\,\,\,\,\, z\in C_1$$ because $d(c_2(y),y)<\eps_1$ and $d(y,z)>\eps_1$ if $z\in C_1\setminus\{y\}$.
	Thus, the set $$C_2=C_1\cup \{ c_2(y); \,\,y\in C_1\}$$ has $2^2$ elements, $C_2\subset W^u_\varepsilon(x)$ and for each $y\in C_{1}$ there is $c_2(y)\in C_2$ such that $$d(c_2(y),y)<\frac{\eps}{2^2}.$$
	We can construct using an induction process an increasing sequence of sets $(C_k)_{k\in\mathbb{N}}$ such that $C_k$ has $2^k$ elements, $C_k\subset W^u_\varepsilon(x)$ and for each $y\in C_{k-1}$ there exists $c_k(y)\in C_k$ such that $$d(c_k(y),y)<\frac{\eps}{2^k}.$$
	Thus, we can consider the set $$C_x=\overline{\bigcup_{k\geq 1}C_k},$$ that is a compact set contained in $W^u_\varepsilon(x)$, since $W^u_\varepsilon(x)$ is closed and $C_k\subset W^u_\varepsilon(x)$ for every $k\in\N$. To see that $C_x$ is perfect let $z\in C_x$. If $z\notin C_k$ for every $k\in\N$, then clearly $z$ is accumulated by points of $C_x$. If $z\in C_k$ for some $k\in\N$, then 
	$$z\in C_n \,\,\,\,\,\, \text{for every} \,\,\,\,\,\, n\geq k,$$ since $(C_k)_{k\in\N}$ is an increasing sequence. Thus, for each $\alpha>0$ we can choose $N>k$ such that $$\frac{\eps}{2^N}<\alpha$$ and since $z\in C_N$ it follows that there exists $c_N(z)\in C_{N+1}$ satisfying
	$$d(c_N(z),z)<\frac{\eps}{2^N}<\alpha.$$ So, for each $z\in C_x$ and $\alpha>0$ we can find $c_N(z)\in C_x$ such that $d(z,c_N(z))<\alpha$. This proves that $z$ is an accumulation point of $C_x$ and that $C_x$ is perfect. The proof for the case $f^{-1}$ sensitive is analogous and we leave the details to the reader.
\end{proof} 

Next section we obtain consequences of this theorem generalizing results in \cite{ACCV} and \cite{CC2}.

\section{Positive cw-expansivity and shadowing}

In \cite{Kato93} and \cite{Kato93B} Kato introduced a positive notion of cw-expansiveness. 

\begin{definition}
A map $f:X\rightarrow X$ is \emph{positively cw-expansive} if there exists $c>0$ such that $W^s_c(x)$ is totally disconnected for every $x\in X$. Equivalently, for every non-trivial continuum $C\subset X$ there exists $n\in\N$ such that $$\diam(f^n(C))>c.$$
\end{definition}

Positively cw-expansive maps defined on continua (compact, connected and non-trivial) satisfy sensitivity to initial conditions. Indeed, every non-empty open set contains a non-trivial continuum that increase when iterated to the future, so local stable-sets cannot contain any open set of the space. Kato exhibit examples of positively cw-expansive homeomorphisms and proved that they cannot be defined in Peano continua (see Corollary 1.7 in \cite{Kato93B}). We will prove that the restriction to certain hyperbolic sets provide examples of positively cw-expansive homeomorphisms satisfying the shadowing property. We omit classical definitions here such as hyperbolicity, non-wandering set $\Omega(f)$, attractor, manifolds and foliations because in the proof we only need the Theorem 1 in \cite{ABD} that states a hyperbolic set contained in $\Omega(f)$ has either empty interior or is the whole ambient manifold. 

\begin{theorem}
Let $f\colon M\to M$ be a diffeomorphism defined in a manifold and let $\Lambda\subset M$ be a hyperbolic attractor of $f$. If $\Lambda \subset \Omega(f)$, $\Lambda\neq M$ and its stable (unstable) foliation is one dimensional, then $f|_{\Lambda}$ ($f^{-1}|_{\Lambda}$) is positively cw-expansive.
\end{theorem}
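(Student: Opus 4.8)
The plan is to show that the stable set $W^s_c(x)$ of the restriction $f|_{\Lambda}$ is totally disconnected for a suitable $c>0$, which is the definition of positive cw-expansiveness. The natural candidate for $c$ comes from the local product structure of the hyperbolic attractor: choose $c$ smaller than the size of the local stable and unstable manifolds on which the standard hyperbolic estimates hold. For such a $c$, the local $c$-stable set of a point $x\in\Lambda$ computed inside $\Lambda$ should be contained in the intersection of the genuine local stable manifold $W^s_{\mathrm{loc}}(x)$ with $\Lambda$. Since the stable foliation is assumed to be one dimensional, $W^s_{\mathrm{loc}}(x)$ is a one dimensional disk, so $W^s_c(x)\cap\Lambda$ is a subset of a one dimensional manifold.

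Next I would rule out that this subset contains an interval. Suppose for contradiction that $W^s_c(x)\cap\Lambda$ contains an arc $J$ of the stable leaf through $x$. Iterating $J$ by $f$ contracts it (stable direction), so every forward iterate $f^n(J)$ stays inside $\Lambda$ and is again an arc of a stable leaf, forcing the forward orbit of $J$ to be contained in $\Lambda$; pulling back, one also checks the arc sits inside $\Lambda$ stably. The key point is then to invoke the cited Theorem 1 of \cite{ABD}: a hyperbolic set contained in $\Omega(f)$ has either empty interior in $M$ or equals $M$. Here $\Lambda$ is a hyperbolic attractor with $\Lambda\subset\Omega(f)$ and $\Lambda\neq M$, so $\Lambda$ has empty interior in $M$. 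To derive a contradiction from the existence of the stable arc $J\subset\Lambda$, I would use the attractor property together with the product structure: near a point of $J$, $\Lambda$ contains a whole neighbourhood worth of unstable plaques (an attractor is locally the union of unstable manifolds of its points), and crossing these unstable plaques with the stable arc $J$ produces a set of the form (arc)$\times$(unstable plaque) inside $\Lambda$. Because the stable foliation is one dimensional, the stable arc plus the full unstable plaques span a neighbourhood of $M$, so $\Lambda$ would have nonempty interior, contradicting \cite{ABD}. Hence $W^s_c(x)\cap\Lambda$ contains no arc, and being a subset of a one dimensional manifold with no arcs it is totally disconnected; equivalently, every non-trivial subcontinuum of $\Lambda$ must be expanded past $c$ in forward time. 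This gives positive cw-expansiveness of $f|_{\Lambda}$.

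The unstable case is symmetric: if instead the unstable foliation is one dimensional, the same argument applied to $f^{-1}$ (for which $\Lambda$ is a hyperbolic repeller, or one passes to a dual hyperbolic attractor statement) shows $f^{-1}|_{\Lambda}$ is positively cw-expansive, since the $c$-unstable sets of $f$ are the $c$-stable sets of $f^{-1}$.

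I expect the main obstacle to be the dimension-counting step that turns a one dimensional stable arc inside $\Lambda$ into an open set of $M$ inside $\Lambda$. This requires care about where the local product structure is valid and about the precise sense in which an attractor is saturated by unstable manifolds of its points; one must make sure the "arc $\times$ unstable plaque" really lands in $\Lambda$ and really fills a neighbourhood of a point of $M$, using that $\dim W^s + \dim W^u = \dim M$ together with the one-dimensionality hypothesis. Once that is pinned down, the appeal to \cite{ABD} is immediate and the rest is routine. A secondary technical point is justifying that the intrinsically-defined $c$-stable set $W^s_c(x)$ (distances measured in $\Lambda$ with the ambient metric) is actually contained in the classical local stable manifold; this is the standard shadowing/hyperbolicity estimate and only needs $c$ chosen small relative to the hyperbolicity constants.
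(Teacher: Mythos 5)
Your proposal is correct and follows essentially the same route as the paper: a non-trivial continuum in a local stable set must be an arc of the one-dimensional stable leaf, saturating it with the local unstable sets (which lie in $\Lambda$ because $\Lambda$ is an attractor) produces a set with nonempty interior inside $\Lambda$, and Theorem 1 of \cite{ABD} then forces $\Lambda=M$, contradicting $\Lambda\neq M$. The paper writes this saturation as $A=\bigcup_{x\in C}W^u_\varepsilon(x)$ and argues by contradiction with the continuum formulation of positive cw-expansiveness, but the content is identical to your arc-times-unstable-plaque argument.
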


\begin{proof} We assume that the stable foliation has dimension one and prove that $f|_{\Lambda}$ is positively cw-expansive. By contradiction, suppose that $f|_{\Lambda}$ is not positively cw-expansive, that is, for each $\eps>0$ there exists a non-trivial continuum $C\subset\Lambda$ and $x\in \Lambda$ such that $C\subset W^s_\varepsilon(x)$. Let $$A=\bigcup_{x\in C}W^u_\varepsilon(x)$$ and note that $\Lambda$ being an attractor implies that
$$W^u_\varepsilon(x)\subset \Lambda \,\,\,\,\,\, \text{for every} \,\,\,\,\,\, x\in \Lambda.$$ This implies that $A\subset \Lambda$ since $C\subset \Lambda$. Since $C$ is a non-degenerate continuum contained in some stable set which is a one-dimensional manifold, it follows that $C$ is also a one-dimensional manifold. This ensures that the interior of $A$ is not empty. Thus the interior of $\Lambda$ is not empty, since $A\subset \Lambda$. By Theorem 1 in \cite{ABD} we have that $\Lambda=M$, contradicting the hypothesis $\Lambda\neq M$. 
\end{proof}

Surface DA attractors and the Solenoid are examples of hyperbolic attractors illustrating this result (see \cite{Robinson} for details of these attractors). All these examples are positively cw-expansive (and hence sensitive) homeomorphisms satisfying the shadowing property. They have in common that local stable sets are uncountable. Indeed, it is proved in \cite{MM} that the Hausdorff dimension of $W^s_{\eps}(x)\cap\Lambda$ is positive in the case $\Lambda$ is a basic piece of an Axiom A diffeomorphism of a surface. We now start to discuss the case all local stable sets are countable.

\begin{definition}
We say that a homeomorphism $f:X\rightarrow X$ is \emph{positively countably-expansive} if there exists $\varepsilon>0$ such that $W^s_\varepsilon(x)$ is countable for every $x\in X$. Equivalently, for every $C\subset X$ uncountable, there exists $n\in\N$ such that $$\diam(f^n(C))>\eps.$$
\end{definition}

Any homeomorphism defined in a countably and compact metric space is clearly positively countably expansive. Thus, the identity map on a countably compact space is positively countably expansive and satisfies the shadowing property (see Theorem 2.3.2 in \cite{AH}). The question we now start to discuss is:

\begin{question}
Does there exist a positively countably expansive homeomorphism with the shadowing property defined in an uncountable compact metric space?
\end{question}

We answer this question negatively in two particular cases: the first assuming transitivity and the second assuming the L-shadowing property. 

\begin{definition}
A map $f\colon X\to X$ is called \emph{transitive}, if for any pair $U,V\subset X$ of non-empty open subsets, there exists $n\in\N$ such that $$f^n(U)\cap V\neq\emptyset.$$
\end{definition}

In this case, there is a residual set of points whose future orbits are dense on the space. A point $x\in X$ is called \emph{chain-recurrent} if for each $\eps>0$ there exists a non-trivial finite $\eps$-pseudo orbit starting and ending at $x$. The set of all chain-recurrent points is called the \textit{chain recurrent set} and is denoted by $CR(f)$. This set can be split into disjoint, compact and invariant subsets, called the \emph{chain-recurrent classes}. The \emph{chain-recurrent class} of a point $x\in X$ is the set of all points $y\in X$ such that for each $\eps>0$ there exist a periodic $\eps$-pseudo orbit containing both $x$ and $y$. If $f$ is transitive, then the whole space $X$ is a chain recurrent class. Now we define the L-shadowing property.

\begin{definition}
A homeomorphism $f$ satisfies the L-shadowing property if for every $\eps>0$, there exists $\delta>0$ such that for every sequence $(x_k)_{k\in\Z}\subset X$ satisfying $$d(f(x_k),x_{k+1})\leq\delta\,\,\,\,\,\, \text{for every} \,\,\,\,\,\, k\in\Z \,\,\,\,\,\, \text{and}$$  $$d(f(x_k),x_{k+1})\to0 \,\,\,\,\,\, \text{when} \,\,\,\,\,\, |k|\to\infty,$$ there is $z\in X$ satisfying $$d(f^k(z),x_k)\leq\eps \,\,\,\,\,\, \text{for every} \,\,\,\,\,\, k\in\Z \,\,\,\,\,\, \text{and}$$ $$d(f^k(z),x_k)\to0\,\,\,\,\,\, \text{when} \,\,\,\,\,\, |k|\to\infty.$$ 
The sequence $(x_k)_{k\in\Z}$ is called a $\delta$-\emph{limit-pseudo-orbit} of $f$ and we say that $z$ $\eps$-\emph{limit-shadows} the sequence $(x_k)_{k\in\Z}$.
\end{definition}

The L-shadowing property was introduced in \cite{CC2} and further explored in \cite{ACCV} and \cite{ACCV3}. This is a stronger version of the shadowing property that is present on continuum-wise-hyperbolic systems as proved in \cite{ACCV3} and it implies a spectral decomposition of the chain recurrent set (see \cite{ACCV}). The chain-recurrent classes of homeomorphisms satisfying the L-shadowing property are either expansive or admit arbitrarilly small \emph{topological semihorseshoes} (see Theorem B in \cite{ACCV}) that are compact periodic sets whose restriction is semiconjugate to a shift of two symbols. In particular, topological semihorseshoes are uncountable sets with positive entropy contained in arbitrarilly small dynamical balls. The following is our second main result.

\begin{mainthm}\label{C}
	Let $f:X\rightarrow X$ be a positively countably-expansive homeomorphism, defined in a compact metric space $X$.
	If at least one of the following conditions is satisfied
	\begin{itemize}
		\item[(1)] $f$ is transitive and has the shadowing property
		\item[(2)] $f$ has the L-shadowing property
	\end{itemize} then $X$ is countable.
\end{mainthm}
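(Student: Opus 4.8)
\textbf{Proof proposal for Theorem \ref{C}.}

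The plan is to prove the contrapositive: assuming $X$ is uncountable, I will show $f$ cannot be positively countably-expansive. The key observation is that Theorem \ref{B} converts sensitivity (of $f$ or $f^{-1}$) plus shadowing into an uncountable local stable/unstable set, which directly contradicts positive countable-expansivity. So the whole argument reduces to: under hypothesis (1) or (2), if $X$ is uncountable then either $f$ or $f^{-1}$ is sensitive and the relevant (L-)shadowing gives us enough to invoke Theorem \ref{B}. First I would dispense with the trivial reduction: a positively countably-expansive homeomorphism on an uncountable compact metric space must have an uncountable "non-wandering-like" core, since if $X$ were a countable set union a Cantor-type remainder we would already be done; more precisely, I want to locate an uncountable chain-recurrent piece on which the dynamics is ``recurrent enough'' to force sensitivity.

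For case (1), transitivity plus shadowing: here $X$ itself is a single chain-recurrent class, and a transitive homeomorphism on a compact metric space with more than one point is sensitive unless it is an equicontinuous/minimal isometry-type system — and a standard dichotomy (transitive $\Rightarrow$ either sensitive or almost equicontinuous, and with shadowing the almost-equicontinuous non-sensitive case forces $X$ to be a single periodic orbit or a finite set, hence countable) closes it. So under (1), if $X$ is uncountable then $f$ is sensitive; let $\eps>0$ be a sensitivity constant. Apply Theorem \ref{B}(1): for any $x\in X$ there is a compact perfect set $C_x\subset W^u_\eps(x)$. A nonempty compact perfect metric space is uncountable, so $W^u_\eps(x)$ is uncountable. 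Now I need to transfer this to a stable set: using transitivity and shadowing (indeed using that $W^u_\eps(x)$ being uncountable and the shadowing property, one can glue an unstable piece at $x$ to a stable piece at a suitable point $y$ whose forward orbit approximates $x$), I produce a point $z$ whose $\eps'$-stable set (for $\eps'$ comparable to $\eps$) is uncountable, contradicting positive countable-expansivity with constant $\eps'$; alternatively, and more cleanly, I note positive countable-expansivity with constant $c$ says \emph{every} uncountable set is eventually $c$-expanded, equivalently $W^s_c(x)$ is countable for all $x$ — and I should instead run Theorem \ref{B} with $f^{-1}$: transitivity of $f$ is equivalent to transitivity of $f^{-1}$, shadowing of $f$ is equivalent to shadowing of $f^{-1}$, so $f^{-1}$ is also sensitive and Theorem \ref{B}(2) gives a compact perfect $C_x\subset W^s_\eps(x)$, immediately contradicting that $W^s_\eps(x)$ is countable (taking $\eps$ small enough to be below the countable-expansivity constant, which we may since shrinking the sensitivity constant is harmless).

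For case (2), the L-shadowing property: L-shadowing implies the ordinary shadowing property (limit-pseudo-orbits include all genuine $\delta$-pseudo-orbits after a harmless truncation argument, or one invokes that L-shadowing $\Rightarrow$ shadowing directly from \cite{CC2}), and L-shadowing is also symmetric under $f\mapsto f^{-1}$. By Theorem B in \cite{ACCV}, every chain-recurrent class of $f$ is either expansive or contains arbitrarily small topological semihorseshoes; a topological semihorseshoe is uncountable and sits inside arbitrarily small dynamical balls, so its presence contradicts positive countable-expansivity (an uncountable set inside a ball of radius $<\eps$ is never $\eps$-expanded). Hence under (2) every chain-recurrent class is expansive; but an expansive homeomorphism that is positively countably-expansive on a \emph{compact} class — combined with shadowing — forces each class to be finite (expansive + shadowing on a compact metric space with no isolated-in-the-right-sense structure gives a system conjugate to a subshift of finite type, and positive countable-expansivity of a subshift forces it to be finite). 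Finally, $X=CR(f)$ here? Not necessarily for a general homeomorphism; so the last step is to observe that positive countable-expansivity plus shadowing forces $CR(f)=X$ is \emph{not} available, and instead use that $X$ retracts dynamically onto $CR(f)$ via shadowing (every point is shadowed by chain-recurrent behavior), so $X$ countable iff $CR(f)$ countable. Assembling: each chain-recurrent class is finite, there are at most countably many classes (by compactness and a gap argument separating classes by their minimal-$\eps$-pseudo-orbit diameters — \emph{this is the step I expect to be the main obstacle}, since in general there can be uncountably many chain-recurrent classes), hence $CR(f)$ and therefore $X$ is countable.

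\textbf{Main obstacle.} The delicate point in case (2) is controlling the \emph{number} of chain-recurrent classes: showing each class is countable (indeed finite) is the easy consequence of Theorem B in \cite{ACCV} and positive countable-expansivity, but ruling out uncountably many such classes requires the spectral-decomposition consequence of L-shadowing from \cite{ACCV} (L-shadowing yields a spectral decomposition of $CR(f)$ into finitely many — or at least countably many — classes) together with the fact that outside $CR(f)$ every orbit is wandering and the space is covered by the stable sets of the classes, each of which inherits countability. In case (1) transitivity trivializes this (one class, namely all of $X$), so the real content there is the transitive $\Rightarrow$ sensitive dichotomy under shadowing, which I would extract from the well-known fact that a non-sensitive transitive system is almost equicontinuous and an almost equicontinuous system with shadowing on a compact space is a finite set of periodic points.
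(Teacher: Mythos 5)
Your overall strategy (invoke Theorem~\ref{B} for $f^{-1}$ to rule out sensitivity of $f^{-1}$, and reduce case (2) to case (1) via the chain-recurrent classes) is the right one, but there is a genuine gap in case (1): you dismiss the non-sensitive branch of Moothathu's dichotomy by claiming that a transitive, non-sensitive homeomorphism with shadowing must be a finite set of periodic points. That is false: the dyadic odometer is minimal (hence transitive), equicontinuous (hence non-sensitive), has the shadowing property, and lives on an uncountable Cantor set. So the equicontinuous case cannot be discarded on cardinality grounds alone; it is exactly here that positive countable expansivity must be used a second time. The paper's argument is: if $f^{-1}$ is not sensitive it is equicontinuous, hence so is $f$ (Akin--Glasner), and equicontinuity places an entire $\delta$-ball inside $W^s_c(x)$, which is countable by hypothesis; a finite cover of $X$ by such balls then makes $X$ countable. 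Without this step your case (1) does not close, and since your case (2) feeds back into case (1), the gap propagates.

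In case (2) you also take a detour that does not work as stated: from Theorem B of \cite{ACCV} you conclude each chain-recurrent class is expansive and then assert that expansivity plus shadowing yields conjugacy to a subshift of finite type, forcing finiteness. Expansive homeomorphisms with shadowing are not in general conjugate to subshifts (Anosov diffeomorphisms are a counterexample), and the conclusion you need is only countability of each class, which follows directly from case (1) applied to the restriction of $f$ to each class (each restriction is transitive, has shadowing, and is positively countably expansive); the finiteness of the number of classes is supplied by \cite{CC2}. Finally, your closing step ``$X$ countable iff $CR(f)$ countable'' is under-justified: you need the limit shadowing property (a consequence of L-shadowing) to show that every $z\in X$ lies in $W^s(x)$ for a single point $x\in CR(f)$, and then the inclusion $W^s(x)\subset\bigcup_{n\ge 0}f^{-n}\bigl(W^s_{\eps}(f^n(x))\bigr)$ together with positive countable expansivity to see that each such global stable set is a countable union of countable sets. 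Merely knowing that orbits accumulate on the classes does not bound the cardinality of the basins.
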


We will split the proof of this theorem in the items (1) and (2) and before proving each item we will state a few definitions and results that are going to be important in the proof.

\begin{definition}
We say that a map $f\colon X\to X$ is \emph{equicontinuous} if the sequence of iterates $(f^n)_{n\in\N}$ is an equicontinuous sequence of maps. Equivalently, for every $\eps>0$ there exists $\delta>0$ such that $$d(x,y)<\delta \,\,\,\,\,\, \text{implies} \,\,\,\,\,\, d(f^n(x),f^n(y))<\eps \,\,\,\,\,\, \text{for every} \,\,\,\,\,\, n\in\N.$$ In this case, the $W^s_{\eps}(x)$ contains the ball of radius $\delta$ centered at $x$. 
\end{definition}

A classical result in topological dynamics is the Auslander-York dichotomy: a minimal homeomorphism of a compact metric space is either sensitive or equicontinuous. In \cite[Corollary 1]{Moot} Moothathu proved that a transitive homeomorphism satisfying the shadowing property is either sensitive or equicontinuous. We remark that a consequence of Theorem \ref{B} is the following result. 

\begin{corollary}\label{sen}
If $f:X\rightarrow X$ is a positively countably-expansive homeomorphism, defined in a compact metric space $X$, and satisfying the shadowing property, then $f^{-1}$ is not sensitive. 
\end{corollary}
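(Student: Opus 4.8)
The plan is to deduce this from Theorem \ref{B}(2) by a short argument by contradiction. Suppose, for contradiction, that $f^{-1}$ is sensitive, and let $\eps_0>0$ be a sensitivity constant for $f^{-1}$. Let $c>0$ be a constant witnessing positive countable-expansivity of $f$, so that $W^s_c(x)$ is countable for every $x\in X$. The first step is the observation that a homeomorphism which is sensitive with a given constant is also sensitive with any smaller constant, since for each $x$ and $\delta>0$ the same point $y$ and the same iterate $n$ that give $d(f^{-n}(x),f^{-n}(y))>\eps_0$ also give a strict inequality past any $\eps\le\eps_0$. Applying this with $\eps:=\min\{\eps_0,c\}$, we may assume $f^{-1}$ is sensitive with sensitivity constant $\eps\le c$.

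Next I would invoke Theorem \ref{B}(2) for $f$, which has the shadowing property by hypothesis: for each $x\in X$ there is a compact and perfect set $C_x\subset W^s_\eps(x)$, with $C_x\neq\emptyset$ (the inductive construction in the proof of Theorem \ref{B} places $x$ itself in $C_x$). Now recall the elementary topological fact that a nonempty perfect subset of a complete metric space is uncountable; since $X$ is compact, hence complete, and $C_x$ is closed in $X$, the set $C_x$ is uncountable. But $\eps\le c$ yields $W^s_\eps(x)\subset W^s_c(x)$, so $W^s_c(x)$ contains the uncountable set $C_x$, contradicting the fact that $W^s_c(x)$ is countable. Therefore $f^{-1}$ cannot be sensitive.

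I do not expect any substantive obstacle here once Theorem \ref{B} is in hand; the proof is essentially immediate. The only point that requires a moment's care is the initial reduction to the constant $\eps=\min\{\eps_0,c\}$, which is needed to ensure that the uncountable compact perfect set produced by Theorem \ref{B} actually lands inside $W^s_c(x)$ and so genuinely contradicts positive countable-expansivity.
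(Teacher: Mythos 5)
Your proof is correct and follows essentially the same route as the paper: assume $f^{-1}$ sensitive, apply Theorem \ref{B}(2) to get a nonempty compact perfect subset of a local stable set, and contradict countability. Your extra step of shrinking the sensitivity constant to $\min\{\eps_0,c\}$ so that $C_x\subset W^s_c(x)$, together with the explicit appeal to the fact that a nonempty perfect subset of a complete metric space is uncountable, makes precise two points the paper's one-line argument leaves implicit.
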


\begin{proof}
If $f^{-1}$ is sensitive, Theorem \ref{B} assures the existence of compact and perfect sets on every local stable set, but this imples that local stable sets are uncountable and contradicts the hypothesis of positive countable expansivity.
\end{proof}

\begin{proof}[Proof of Theorem \ref{C} (1)]
Let $f$ be a positively countably expansive homeomorphism that is transitive and satisfies the shadowing property. Note that $f^{-1}$ is also transitive and satisfies the shadowing property, so $f^{-1}$ is either sensitive or equicontinuous. Corollary \ref{sen} assures that $f^{-1}$ cannot be sensitive, so it is equicontinuous. Since $f$ is a homeomorphism, \cite[Theorem 3.4]{AG} assures that $f$ is also equicontinuous. Let $c>0$ be a positively countably expansive constant of $f$ and choose $\delta>0$, given by equicontinuity, such that
$$d(x,y)<\delta \,\,\,\,\,\, \text{implies} \,\,\,\,\,\, y\in W^s_{c}(x).$$ Thus, every open set of diameter smaller than $\delta$ is contained in a same $c$-stable set and, hence, is countable. Since $X$ is compact, we can choose a finite open cover with elements of diameter smaller than $\delta$. This implies that $X$ is countable, since it is written as a finite union of countable sets.
\end{proof}

Now we define the limit shadowing property.

\begin{definition}
A sequence $(x_k)_{k\in\N}\subset X$ is called a \emph{limit pseudo-orbit} if it
satisfies $$d(f(x_k),x_{k+1})\rightarrow 0 \,\,\,\,\,\, \text{when} \,\,\,\,\,\, k\rightarrow\infty.$$
The sequence $\{x_k\}_{k\in\N}$ is \emph{limit-shadowed} if there exists $y\in X$ such that $$d(f^k(y),x_k)\rightarrow 0, \,\,\,\,\,\, \text{when} \,\,\,\,\,\,k\rightarrow
\infty.$$ We say that $f$ has the \emph{limit shadowing property} if every limit pseudo-orbit is limit-shadowed.
\end{definition}

This property was introduced by Eirola, Nevanlinna and Pilyugin in \cite{ENP}, see also \cite{Cthesis}, \cite{C}, \cite{C2}, \cite{CK} and \cite{P1}. The L-shadowing property refines both the shadowing and the limit shadowing properties since homeomorphisms satisfying the L-shadowing property also satisfies both shadowing and limit shadowing. Indeed, the shadowing property is proved in \cite[Proposition 2]{CC2} and the limit shadowing property can be seen as a consequence of \cite[Theorem 2.4]{ACCV}. We exhibit in the next result a much simpler proof of this fact.

\begin{proposition}
If $f\colon X\to X$ is a homeomorphism satisfying the L-shadowing property, then it satisfies the limit shadowing property.
\end{proposition}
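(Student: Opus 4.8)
The plan is to derive the limit shadowing property directly from the L-shadowing property by passing from a one-sided limit pseudo-orbit to a two-sided $\delta$-limit-pseudo-orbit. First I would observe that the L-shadowing property is stated for two-sided sequences indexed by $\mathbb{Z}$ satisfying both a uniform bound $d(f(x_k),x_{k+1})\le\delta$ for all $k$ and the decay $d(f(x_k),x_{k+1})\to 0$ as $|k|\to\infty$, whereas the limit shadowing property concerns one-sided sequences $(x_k)_{k\in\mathbb{N}}$ with $d(f(x_k),x_{k+1})\to 0$ as $k\to\infty$ and no uniform smallness requirement near $k=0$. So there are two gaps to bridge: the index set, and the uniform bound.

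The key steps, in order, would be: (i) Given $\eps>0$, let $\delta>0$ be the constant provided by the L-shadowing property. (ii) Take an arbitrary limit pseudo-orbit $(x_k)_{k\in\mathbb{N}}$; since $d(f(x_k),x_{k+1})\to 0$, choose $N\in\mathbb{N}$ so that $d(f(x_k),x_{k+1})<\delta$ for all $k\ge N$. (iii) Discard the initial segment and relabel: the tail $(x_{k})_{k\ge N}$ is a one-sided sequence with all consecutive jumps $<\delta$ and tending to $0$. (iv) Extend it to a genuine bona fide orbit on the negative side by setting $y_k := f^{k-N}(x_N)$ for $k< N$, so that $(y_k)_{k\in\mathbb{Z}}$ defined as $f^{k-N}(x_N)$ for $k\le N$ and $x_k$ for $k\ge N$ is a two-sided sequence; its consecutive jumps are exactly zero for $k<N$ and $<\delta$ for $k\ge N$, and they tend to $0$ as $|k|\to\infty$ (trivially on the left, by hypothesis on the right). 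Thus $(y_k)_{k\in\mathbb{Z}}$ is a $\delta$-limit-pseudo-orbit. (v) Apply the L-shadowing property to get $z\in X$ with $d(f^k(z),y_k)\le\eps$ for all $k$ and $d(f^k(z),y_k)\to 0$ as $|k|\to\infty$; in particular $d(f^k(z),x_k)\to 0$ as $k\to\infty$. (vi) Since $\eps$ was arbitrary but the conclusion of limit shadowing only requires the asymptotic statement, this already shows the original limit pseudo-orbit is limit-shadowed by $z$ (after the harmless re-indexing shift, using that $f$ is a homeomorphism so shifting the index corresponds to replacing $z$ by an iterate).

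I do not expect a serious obstacle here — the proof is essentially bookkeeping. The one point that needs a little care is the re-indexing in step (iii)–(vi): after relabeling the tail, the shadowing point $z$ we obtain shadows $(y_k)$, and one must note that $(x_{k})_{k\in\mathbb N}$ is then limit-shadowed by the appropriate backward iterate of $z$, which is legitimate because $f$ is a homeomorphism. The mild subtlety is that the L-shadowing hypothesis requires the two-sided uniform bound $\le\delta$ everywhere, which is why we cannot feed in the original sequence directly and must first truncate and glue on an honest orbit on the left; the glued part contributes zero error, so it causes no problem. Everything else — that a genuine orbit segment has zero consecutive jumps, that concatenation preserves the limit condition, that the conclusion transfers back after the shift — is routine.
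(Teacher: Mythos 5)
Your proof is correct and follows essentially the same route as the paper's: truncate the limit pseudo-orbit past an index $N$ where the jumps fall below $\delta$, glue the genuine backward orbit of $x_N$ on the left to form a two-sided $\delta$-limit-pseudo-orbit, apply L-shadowing, and recover the shadowing point for the original sequence as $f^{-N}(z)$. The paper simply fixes $\eps=\diam(X)$ rather than taking $\eps$ arbitrary, which changes nothing since only the asymptotic conclusion is needed.
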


\begin{proof}
Let $(x_k)_{k\in\N}$ be a limit pseudo orbit of $f$ and let $\eps=\diam(X)$. The L-shadowing property assures the existence of $\delta>0$ such that any $\delta$-limit pseudo orbit is $\eps$-limit shadowed. Choose $N\in\N$ such that
$$d(f(x_k),x_{k+1})<\delta \,\,\,\,\,\, \text{for every} \,\,\,\,\,\, k\geq N$$ and consider the sequence $(y_k)_{k\in\N}$ defined by
$$y_k=\begin{cases}
x_{N+k}, & k\geq0\\
f^{k}(x_N), & k<0.
\end{cases}$$
This is clearly a $\delta$-limit pseudo orbit and, hence, there exists $z\in X$ that $\eps$-limit shadows it. In particular, $f^{-N}(z)$ limit shadows $(x_k)_{k\in\N}$.
\end{proof}

In the following we use the limit shadowing property and the finiteness of the number of distinct chain recurrent classes to write the whole space as the union of the stable sets of chain recurrent points.

\begin{proposition}\label{L}
If $f\colon X\to X$ is a homeomorphism defined in a compact metric space and satisfies the L-shadowing property, then
$$X=\bigcup_{x\in CR(f)}W^s(x).$$
\end{proposition}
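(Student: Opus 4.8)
The plan is to show that every point $z\in X$ lies in $W^s(x)$ for some chain-recurrent point $x$, the reverse inclusion $\bigcup_{x\in CR(f)}W^s(x)\subset X$ being trivial. The natural strategy is to build, starting from $z$, a limit pseudo-orbit whose positive tail is controlled by the chain-recurrent structure, and then limit-shadow it. First I would fix $z\in X$ and consider its forward orbit $(f^n(z))_{n\geq 0}$. The $\omega$-limit set $\omega(z)$ is a non-empty compact invariant set, and a standard fact (via the shadowing property, which $f$ enjoys by \cite[Proposition 2]{CC2}) is that $\omega(z)\subset CR(f)$: indeed any point in $\omega(z)$ admits arbitrarily small periodic $\eps$-pseudo orbits obtained by concatenating a long stretch of the true orbit from a near-return to itself. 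Fix then a point $x\in\omega(z)\subset CR(f)$.

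Next I would use transitivity of $f$ restricted to the chain-recurrent class $[x]$, or more directly the chain-recurrence of $x$, to produce for each $m\in\N$ a finite $\frac1m$-pseudo orbit from a point near the orbit of $z$ to $x$ and from $x$ back to itself, and splice these finite segments together. Concretely: choose an increasing sequence $n_1<n_2<\cdots$ with $f^{n_k}(z)\to x$; between the true orbit segment ending at $f^{n_k}(z)$ and the true orbit segment from $x$ I insert a finite $\frac1k$-chain from $f^{n_k}(z)$ to $x$ (which exists because $d(f^{n_k}(z),x)\to 0$ and one may take the trivial one-jump chain, since a single jump of size $d(f^{n_k}(z),x)<\frac1k$ is already a $\frac1k$-pseudo orbit step). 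Assembling these, I obtain a sequence $(y_j)_{j\in\Z}$ that agrees with the orbit of $z$ on a negative/initial block, then after finitely many steps follows the orbit of $x$ forever; the jump errors $d(f(y_j),y_{j+1})$ are $0$ except at the finitely many splice points, and can be made to tend to $0$ as $j\to+\infty$ by only performing finitely many splices — in fact a single splice suffices: take $(y_j)$ equal to $f^j(z)$ for $j\le N$ and equal to $f^{j-N}(x)$ for $j>N$, for $N$ chosen so that $d(f(f^N(z)),x)=d(f^{N+1}(z),x)$ is as small as desired. On the negative side extend by $f^j(z)$. This is a $\delta$-limit-pseudo-orbit for any prescribed $\delta$ once $N$ is large, since the only error is the one jump at index $N$, which we can force below $\delta$, and all other errors vanish, so $d(f(y_j),y_{j+1})\to0$ as $|j|\to\infty$ trivially.

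Now apply the L-shadowing property with $\eps=\diam(X)$ (exactly as in the preceding Proposition): there is $w\in X$ with $d(f^j(w),y_j)\to 0$ as $|j|\to\infty$. For $j\to-\infty$ this gives $d(f^j(w),f^j(z))\to0$, i.e.\ $w\in W^u(z)$ — but what I actually want is a stable relation, so instead I should run the construction with the roles reversed, or simply observe that for $j\to+\infty$ we get $d(f^j(w),f^{j-N}(x))\to0$, hence $f^{-N}(w)\in W^s(x)$ with $x\in CR(f)$, while the negative tail forces $d(f^j(w),f^j(z))\to 0$, so $w\in W^u(z)$; thus $z$ and $w$ are asymptotic in backward time. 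To land $z$ itself inside a stable set of a chain-recurrent point, I apply the same reasoning to $f^{-1}$: since $f$ has the L-shadowing property and $X$ is compact, so does $f^{-1}$ with the same constants, and $CR(f^{-1})=CR(f)$. Running the argument for $f^{-1}$ produces, for each $z$, a point asymptotic to it in forward time that limit-shadows the backward orbit of a chain-recurrent point; unwinding this yields exactly $z\in W^s(x)$ for some $x\in CR(f)$. The main obstacle is bookkeeping the direction of asymptoticity and making sure the single-splice limit-pseudo-orbit genuinely has its jump errors tending to zero at $both$ ends — this is where the hypothesis that $f$ is a homeomorphism on a compact space (so that the backward orbit errors are automatically zero) does the work, and one must be careful that the chain-recurrence of the target point, not transitivity, is what is really needed.
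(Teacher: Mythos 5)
There is a genuine gap, and it sits exactly where you flag your own unease about ``bookkeeping the direction of asymptoticity.'' Your spliced sequence ($y_j=f^j(z)$ for $j\le N$, $y_j=f^{j-N}(x)$ for $j>N$) is indeed a limit pseudo-orbit, and L-shadowing does produce a point $w$ with $d(f^j(w),y_j)\to0$ as $|j|\to\infty$. But all conclusions you can extract concern $w$, not $z$: you get $w\in W^u(z)$ and $w\in W^s(x')$ for some $x'\in CR(f)$, i.e.\ the nonemptiness of $W^u(z)\cap W^s(x')$. Since the orbit of $z$ agrees with $(y_j)$ only up to the finite time $N$, nothing forces $z$ itself to be forward asymptotic to anything. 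The time-reversal ``fix'' does not repair this: running the argument for $f^{-1}$ yields a point $w$ with $d(f^n(w),f^n(z))\to0$ as $n\to+\infty$ (so $z\in W^s(w)$) and $w\in W^u(x')$ with $x'\in CR(f)$ --- and $z\in W^s(w)$ together with $w\in W^u(x')$ does not give $z\in W^s(x'')$ for any chain-recurrent $x''$, because $w$ need not be chain recurrent (think of a point on the unstable set of a source in a Morse--Smale system). Your final sentence ``unwinding this yields exactly $z\in W^s(x)$'' silently swaps $W^u$ for $W^s$.

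The missing idea is that the forward tail of the limit pseudo-orbit must be simultaneously asymptotic to the \emph{true} forward orbit of $z$ and contained in a single chain class, so that the triangle inequality transfers the shadowing conclusion back to $z$. The paper does this by first invoking the finiteness of the chain classes $C_1,\dots,C_n$ (from \cite{CC2}) and the Aoki--Hiraide argument giving $X=\bigcup_i W^s(C_i)$; then, for $z\in W^s(C_i)$, it takes $x_k\in C_i$ minimizing $d(f^k(z),C_i)$, so that $(x_k)_{k\in\N}$ is a limit pseudo-orbit lying in $C_i$ with $d(f^k(z),x_k)\to0$. Limit shadowing of $f|_{C_i}$ produces $x\in C_i\subset CR(f)$ with $d(f^k(x),x_k)\to0$, whence $d(f^k(z),f^k(x))\to0$ and $z\in W^s(x)$. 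Your single-splice construction cannot reproduce this because its forward tail (the orbit of $x\in\omega(z)$) is not asymptotic to the forward orbit of $z$ --- that asymptoticity is precisely the statement being proved.
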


\begin{proof}
It is proved in \cite{CC2} that the L-shadowing property implies that the chain recurrent set is a finite union of distinct chain recurrent classes $$CR(f)=\bigcup_{i=1}^nC_i.$$ This implies that the restriction of $f$ to each of these classes satisfies the L-shadowing property and, in particular, the limit shadowing property.
The argument in \cite[Theorem 3.2.2]{AH} proves that
$$X=\bigcup_{i=1}^nW^s(C_i),$$ where $W^s(C)=\{y\in X; \lim_{k\to\infty}d(f^k(y),C)=0\}$. Hence, if $z\in X$, then 
$$z\in W^s(C_i) \,\,\,\,\,\, \text{for some} \,\,\,\,\,\, i\in\{1,\dots,n\}.$$ We can project the orbit of $z$ into the class $C_i$ considering a sequence of points $(x_k)_{k\in\N}\subset C_i$ that minimize the distance between $f^k(z)$ and $C_i$. It follows from $$\lim_{k\to\infty}d(f^k(z),C_i)=0$$ that $(x_k)_{k\in\N}$ is a limit pseudo orbit of $f$ and then the limit shadowing property assures the existence of $x\in C_i$ that limit shadows $(x_k)_{k\in\N}$. In particular we obtain
$$\lim_{k\to \infty}d(f^k(z),f^k(x))=0 \,\,\,\,\,\, \text{i.e.} \,\,\,\,\,\, z\in W^s(x).$$
\end{proof}

\begin{proof}[Proof of Theorem \ref{C} (2)]
Let $f$ be a positively countably expansive homeomorphism satisfying the L-shadowing property. In this case, there is only a finite number of distinct chain recurrent classes and the restriction of $f$ to each of these classes is transitive, has the shadowing property and, by hypothesis, is positively countably expansive. Then item (1) assures that each chain recurrent class is countable, and since there is only a finite number of them, the chain recurrent set is countable. Proposition \ref{L} ensures that 
$$X=\bigcup_{x\in CR(f)}W^s(x)$$
and, hence, to prove that $X$ is countable it is enough to prove that $W^s(x)$ is countable for every $x\in CR(f)$. 
%A consequence of \cite[Theorem B]{ACCV} is that $f|_{CR(f)}$ is expansive, because otherwise, $CR(f)$ would contain arbitrarilly small topological semihorseshoes, which as observed above, contradicts positive countable expansivity. 
Since
$$W^s(x)\subset\bigcup_{n\in\N\cup\{0\}}f^{-n}(W^s_{\eps}(f^n(x))) \,\,\,\, \text{ for every} \,\,\,\,\,\, x\in CR(f),$$ the existence of $x\in CR(f)$ such that $W^s(x)$ is uncountable implies that $$f^{-n}(W^s_{\eps}(f^n(x))$$ would be uncountable for some $n\in\N\cup\{0\}$. Consequently $W^s_{\eps}(f^n(x))$ would be uncountable, yielding contradiction.
\end{proof}

This theorem generalizes Theorems A and B in \cite{CC2} and Theorem G in \cite{ACCV} to the case of positive countable expansivity. In \cite{CC2} it is proved that positively n-expansive homeomorphisms with the additional assumptions of transitivity and shadowing, or the L-shadowing property, can only be defined on finite spaces. More generaly, in \cite{ACCV} is proved that positively finite-expansive homeomorphisms satisfying the shadowing property can only be defined in finite spaces.

\section{Examples}

In this last section we introduce more examples of sensitive homeomorphisms satisfying the shadowing property. The first class of examples considers the continuum-wise expansive homeomorphisms defined by Kato in \cite{Kato93}.

\begin{definition}
We say that $f\colon X\to X$ is \emph{continuum-wise expansive} if there exists $c>0$ such that $W^u_c(x)\cap W^s_c(x)$ is totally disconnected for every $x\in X$. Equivalently, for each non-trivial continuum $C\subset X$, there exists $n\in\Z$ such that \[\diam(f^n(C))>c.\]
The number $c>0$ is called a cw-expansivity constant of $f$ and the set $W^u_c(x)\cap W^s_c(x)$ is called the dynamical ball of $x$ and radius $c$. 
\end{definition}

Cw-expansive homeomorphisms defined on Peano continua (locally connected continua) are sensitive (see \cite{Hertz}). Thus, any cw-epansive homeomorphism satisfying the shadowing property is an example of a sensitive homeomorphism with the shadowing property. Examples of these systems can be found in \cite{ACCV3}: the pseudo-Anosov diffeomorphism of $\mathbb{S}^2$ and more generally the continuum-wise hyperbolic homeomorphisms.

We can construct sensitive homeomorphisms with the shadowing property that are not cw-expansive as follows. Let $(X,d_X)$ and $(Y,d_Y)$ be metric spaces and $X\times Y$ be the product space endowed with the metric $$d_{X\times Y}((x, y),(x', y'))=\max\{d_X(x,x'), d_Y(y,y')\}.$$ Let $f:X\rightarrow X$ and $g:Y\rightarrow Y$ be homeomorphisms and consider the product homeomorphism $f\times g:X\times Y\rightarrow X\times Y$ defined by $$f\times g\;(x,y)=(f(x),g(y))$$ for every $(x,y)\in X\times Y$.

\begin{theorem}
If $f$ and $g$ have the shadowing property and one of them is sensitive, then $f\times g$ is sensitive and has the shadowing property. Moreover, if either $f$ or $g$ is not cw-expansive then $f\times g$ is not cw-expansive.
\end{theorem}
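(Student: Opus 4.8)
### Proof Proposal

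The plan is to verify the three assertions in order, each reducing to a routine check. First I would prove that $f\times g$ has the shadowing property. Given $\eps>0$, apply the shadowing property of $f$ and $g$ with constant $\eps$ to obtain $\delta_f,\delta_g>0$, and set $\delta=\min\{\delta_f,\delta_g\}$. Any $\delta$-pseudo orbit $(x_n,y_n)_{n\in\Z}$ of $f\times g$ projects, via the definition of $d_{X\times Y}$ as a max-metric, to a $\delta$-pseudo orbit $(x_n)$ of $f$ and a $\delta$-pseudo orbit $(y_n)$ of $g$; shadow each separately by points $x\in X$ and $y\in Y$, and then $(x,y)$ $\eps$-shadows the original sequence, again because the metric is the maximum of the coordinate metrics. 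This step is entirely mechanical.

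Second I would prove sensitivity. Suppose without loss of generality that $f$ is sensitive with sensitivity constant $\eps>0$. I claim $f\times g$ is sensitive with the same constant. Fix $(x,y)\in X\times Y$ and $\delta>0$. By sensitivity of $f$ there are $x'\in X$ with $d_X(x,x')<\delta$ and $n\in\N$ with $d_X(f^n(x),f^n(x'))>\eps$. Then $(x',y)$ satisfies $d_{X\times Y}((x,y),(x',y))=d_X(x,x')<\delta$ and $d_{X\times Y}((f\times g)^n(x,y),(f\times g)^n(x',y))\geq d_X(f^n(x),f^n(x'))>\eps$. Hence $f\times g$ is sensitive.

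Third, the statement about cw-expansivity. Suppose $f$ is not cw-expansive; I want to show $f\times g$ is not cw-expansive. Negating the definition: for every $c>0$ there is a non-trivial continuum $C\subset X$ with $\diam(f^n(C))\leq c$ for all $n\in\Z$, equivalently $C\subset W^u_c(x_0)\cap W^s_c(x_0)$ for some $x_0\in C$ (any point of $C$ works, up to adjusting $c$; more precisely $C$ lies in a dynamical ball of radius $c$). Given $c>0$, pick such a $C$ and any point $y_0\in Y$; then the set $C\times\{y_0\}$ is a non-trivial continuum in $X\times Y$, and for all $n\in\Z$ we have $\diam((f\times g)^n(C\times\{y_0\}))=\diam(f^n(C)\times\{g^n(y_0)\})=\diam(f^n(C))\leq c$, using that the metric is the maximum. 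Since $c>0$ was arbitrary, $f\times g$ is not cw-expansive. The case where $g$ is not cw-expansive is symmetric, taking $\{x_0\}\times D$ for a suitable continuum $D\subset Y$.

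The only mildly delicate point is the passage between the two equivalent formulations of cw-expansivity (dynamical balls versus diameters of iterates of continua) in the third part; working directly with the diameter formulation, as above, avoids any fuss, since $C\times\{y_0\}$ is non-trivial exactly when $C$ is, and the diameter computation is immediate from the max-metric. No step here presents a genuine obstacle; the proof is a sequence of short verifications driven entirely by the choice of the maximum metric on the product.
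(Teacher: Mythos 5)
Your proposal is correct and follows essentially the same route as the paper: sensitivity and the failure of cw-expansivity are verified coordinate-wise using the max-metric, exactly as in the paper's proof (the paper treats the symmetric case where $g$ is not cw-expansive via $\{x\}\times C$, while you treat $f$ via $C\times\{y_0\}$). The only cosmetic difference is that you prove shadowing of the product directly, whereas the paper cites \cite[Theorem 2.3.5]{AH}; your verification is the standard one and is fine.
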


\begin{proof} Suppose that $f$ and $g$ have the shadowing property and $f$ is sensitive. Let $\varepsilon>0$ be the sensitivity constant of $f$. Given $(x,y)\in X\times Y$ and $\delta>0$, the sensitivity of $f$ assures the existence of $x'\in X$ with $d_X(x', x)<\delta$ and $n\in\mathbb{N}$ such that
	$$d_X(f^n(x),f^n(x'))>\varepsilon.$$ Note that $$d_{X\times Y}((x',y), (x,y))=\max\{d_X(x',x), d_Y(y,y)\}=d_X(x',x)<\delta$$ and
	$$\begin{array}{rcl}
	d_{X\times Y}((f\times g)^n(x',y), (f\times g)^n(x, y))&=& d_{X\times Y}((f^n(x'),g^n(y)), (f^n(x),g^n(y)))\\
	& & \\
	&=&\max\{d_X(f^n(x'),f^n(x)), d_Y(g^n(y),g^n(y))\}\\
	& & \\
	&=&d_X(f^n(x'), f^n(x))\;>\;\varepsilon.\\
	\end{array}$$ 
	This proves that $f\times g$ is sensitive. The fact that the product of homeomorphisms which have shadowing property also has the shadowing property is known, see \cite[Theorem 2.3.5]{AH}.
	Now, suppose that $g$ is not cw-expansive, i.e., for every $\varepsilon>0$ there exists a non-degenerate continuum $C\subset Y$ such that 
$$\mbox{diam}(g^n(C))<\varepsilon \,\,\,\,\,\, \text{for every} \,\,\,\,\,\, n\in\mathbb{Z}.$$ Then for each $x\in X$, the subset $\{x\}\times C$ of $X\times Y$ is a non-degenerate continuum satisfying
	$$\mbox{diam}((f\times g)^n(\{x\}\times C))<\varepsilon \,\,\,\,\,\, \text{for every} \,\,\,\,\,\, n\in\mathbb{Z}.$$  Therefore, $f\times g$ is not cw-expansive. 
\end{proof}

\begin{example}
	Let $f:X\rightarrow X$ be an Anosov diffeomorphism and $g:Y\rightarrow Y$ a Morse-Smale diffeomorphism. By theorem above, the product homeomorphism $f\times g$ is sensitive,  has the shadowing property and is not cw-expansive, since $f$ is sensitive and has the shadowing property and $g$ has the shadowing property and is not cw-expansive.
\end{example}

We also note that the shift map $\sigma$ on $[0,1]^{\Z}$ is sensitive (see \cite[Corollary 7.5]{AIL}), satisfies the shadowing property (see \cite[Theorem 2.3.12]{AH}) and is not cw-expansive. Indeed, for each $\varepsilon>0$ and $\underline{x}=(x_i)_{i\in\mathbb{Z}}\in [0,1]^{\Z}$, the non-degenerate continuum \[C_{\underline{x}}=\prod_{i\in\mathbb{Z}}([x_i-\varepsilon,x_i+\varepsilon]\cap [0,1])\] is contained in $W^s_\varepsilon(\underline{x})\cap W^u_\varepsilon(\underline{x})$. To see this, we consider the following metric on $[0,1]^{\Z}$: for $\underline{x}=(x_i)_{i\in\mathbb{Z}},\; \underline{y}=(y_i)_{i\in\mathbb{Z}} \in X$, let
	\[d(\underline{x},\underline{y}) = \sup_{i\in\mathbb{Z}}\frac{|x_i-y_i|}{2^{|i|}}.\]
Thus, if $\underline{y}=(y_i)_{i\in\mathbb{Z}}\in C_{\underline{x}}$, then 
\[y_i\in [x_i-\varepsilon,x_i+\varepsilon] \,\,\,\,\,\, \text{for every} \,\,\,\,\,\, i\in\mathbb{Z}\] and this implies that
\begin{eqnarray*}
d(\sigma^n(\underline{x}), \sigma^n(\underline{y})) &=& \sup_{i\in\mathbb{Z}}\frac{|x_{i+n}-y_{i+n}|}{2^{|i|}}\\
&\leq&\sup_{i\in\mathbb{Z}}\frac{\varepsilon}{2^{|i|}}\\
&\leq& \varepsilon
\end{eqnarray*}
for every $n\in\mathbb{Z}$. Thus, the shift map $\sigma$ on $[0,1]^{\Z}$ is also an example of a sensitive homeomorphism with the shadowing property that is not cw-expansive.

\section*{Acknowledgements}
The second author was supported by Capes and the Alexander von Humboldt Foundation under the project number 88881.162174/2017-1, also by CNPq grant number 405916/2018-3 and by Fapemig grant number APQ-01047-18. The authors thank R\'egis Var\~ao for participating in our first virtual meetings during the preparation of the paper and Welington Cordeiro for discussions about sensitivity.

\vspace{1.5cm}
\noindent

{\em B. Carvalho}
\vspace{0.2cm}

\noindent

Departamento de Matem\'atica,

Universidade Federal de Minas Gerais - UFMG

Av. Ant\^onio Carlos, 6627 - Campus Pampulha

Belo Horizonte - MG, Brazil.
\vspace{0.2cm}

Friedrich-Schiller-Universität Jena

Fakultät für Mathematik und Informatik

Ernst-Abbe-Platz 2

07743 Jena

\vspace{0.2cm}

\email{bmcarvalho@mat.ufmg.br}

\vspace{1.0cm}
{\em M. B. Antunes}
\vspace{0.2cm}

\noindent

Instituto de Matemática, Estatística e Computação Científica,

Universidade Estadual de Campinas - UNICAMP

Rua S\'ergio Buarque de Holanda, 651, Cidade Universitária

Campinas - SP, Brasil.

\vspace{0.2cm}

\email{ra163508@ime.unicamp.br}

\vspace{1.0cm}
{\em M. S. Tacuri}
\vspace{0.2cm}

Departamento de Matem\'atica,

Universidade Federal de Minas Gerais - UFMG

Av. Ant\^onio Carlos, 6627 - Campus Pampulha

Belo Horizonte - MG, Brazil.
\vspace{0.2cm}

margoth\underline{ }.11@hotmail.com

\noindent

\end{document}